\def\R{\mathbb{R}}
\def\Z{\mathbb{Z}}
\renewcommand{\phi}{{\varphi}}
\def\<{\langle}
\def\>{\rangle}
\theoremstyle{plain} 
    \newtheorem{theorem}{Theorem}
    \newtheorem{lemma}[theorem]{Lemma}
    \newtheorem{proposition}[theorem]{Proposition}
\theoremstyle{definition} 
        \newtheorem{remark}{Remark}
\newcommand{\benu}{\begin{enumerate}\setlength\itemsep{4pt}}
\begin{document}

\title{How Many Modes Can a Mixture of Gaussians with Uniformly Bounded Means Have?}
\author[N. Kashyap]{Navin Kashyap}
\address{Navin Kashyap, Department of Electrical Communication Engineering,  Indian Institute of Science, Bangalore, India.}
\email{nkashyap@iisc.ac.in}

\author[M. Krishnapur]{Manjunath Krishnapur}
\address{Manjunath Krishnapur, Department of Mathematics, Indian Institute of Science, Bangalore, India.}
\email{manju@iisc.ac.in}

\thanks{N.K.\ is partially supported by the SERB MATRICS grant MTR/2017/000368. M.K.\ is partially supported by  UGC Centre for Advanced Study and the SERB MATRICS grant MTR/2017/000292.}

\begin{abstract}
We show,  by an explicit construction, that a mixture of univariate Gaussian densities with variance $1$ and means in $[-A,A]$ can have $\Omega(A^2)$ modes. This disproves a recent conjecture of Dytso, Yagli, Poor and Shamai~\cite{DYPS20} who showed that such a mixture can have at most $O(A^{2})$ modes and surmised that the upper bound could be improved to $O(A)$.  Our result holds even if an additional variance constraint is imposed on the mixing distribution. Extending the result to higher dimensions, we exhibit a mixture of Gaussians in $\R^{d}$, with identity covariances  and means inside $[-A,A]^{d}$, that has $\Omega(A^{2d})$ modes.
\end{abstract}

\maketitle

\section{Introduction}

A problem of long-standing interest in information theory is that of determining the capacity of the amplitude-constrained additive Gaussian noise channel \cite{Smith69},\cite{Smith71}. This is the channel defined by the input-output relationship
$
Y = X + Z
$
where the input $X$ is a random variable taking values in the interval $[-A,A]$ for some real number $A > 0$, and $Z \sim \mathcal{N}(0,1)$ is a standard Gaussian random variable independent of $X$. The capacity of this channel is defined as
\begin{equation}
C(A) = \max_{P_X: \text{supp}(P_X) \subseteq [-A,A]} I(X;Y),
\label{def:cap}
\end{equation}
where $I(X;Y)$ is the mutual information between the input $X \sim P_X$ and the output, $Y$, of the channel, and the maximum is taken over probability distributions $P_X$ supported within $[-A,A]$. Smith \cite{Smith69},\cite{Smith71} showed that the capacity-achieving input distribution $P_X$ is unique, symmetric about $0$, and, rather remarkably, has only finitely many mass points. However, his methods were insufficient to determine the precise form of the optimal input distribution, a question that has remained open since then. In particular, the number, $N^*(A)$, of mass points in the support of the optimal $P_X$ is still unknown.

In a recent breakthrough, Dytso, Yagli, Poor and Shamai \cite[Theorem~1]{DYPS20} showed that $N^*(A) = \Omega(A)$ and $N^*(A) = O(A^2)$.\footnote{For functions $f$ and $g$ of the positive real variable $A$, we write $g(A) = \Omega(f(A))$ to mean that there exists a constant $\kappa_0 > 0$ such that $g(A) \ge \kappa_0 f(A)$ for all $A > 0$. Similarly, $g(A) = O(f(A))$ means that there exists a constant $\kappa_1 > 0$ such that $g(A) \le \kappa_1 f(A)$ for all $A > 0$. Finally, $g(A) = \Theta(f(A))$ means that $g(A) = \Omega(f(A))$ and $g(A) = O(f(A))$ are simultaneously true.} Their $O(A^2)$ upper bound, in particular, was derived by relating the problem to one of counting the number of modes in a mixture of Gaussian densities with variance $1$ and means in $[-A,A]$, as we briefly explain next. But before doing so, we would like to clarify that we use ``modes'' to refer exclusively to the local maxima of a density function obtained as a mixture (i.e., a convex combination) of Gaussian density functions. In particular, we do \emph{not} consider the local maxima of the likelihood function of a Gaussian mixture model, which is an entirely different problem set-up --- see e.g.\ \cite{JZBWJ16}.

Suppose that $X^*$ is an input random variable distributed according to the optimal (capacity-achieving) distribution $P^*$ for the problem in \eqref{def:cap}. By the result of Smith \cite{Smith71}, $P^*$ is of the form $p_1\delta_{a_1}+\ldots +p_N\delta_{a_N}$, where $-A\le a_1<a_2<\ldots <a_N\le A$, and the probability masses $p_k>0$ sum to $1$. So, the corresponding output random variable $Y^* = X^* + Z$ has density $f_{Y^*}(t)=\sum_{k=1}^Np_k\phi(t-a_k)$, where $\phi(t)=\frac{1}{\sqrt{2\pi}}e^{-\frac12 t^2}$. In other words, the density $f_{Y^*}$ is a mixture of Gaussian densities with unit variance and centres (means) $a_k$ uniformly bounded within $[-A,A]$. Dytso et al.\ \cite{DYPS20} showed, by appealing to a certain total positivity property of the Gaussian kernel, that $N^*(A)$ is at most twice the number of modes\footnote{Explicitly, Theorem~5 and Lemma~3 in \cite{DYPS20} show that $N^*(A)$ is at most one more than the number of local extrema of $f_{Y^*}$. But since $f_{Y^*}$ is a real-analytic function that decays to $0$ outside $[-A,A]$, the number of its modes (local maxima) exceeds the number of its local minima by exactly $1$, as can be seen from the representative plot of the function in Fig.~\ref{fig:gaussmix}. Hence, one more than the number of its local extrema is exactly equal to twice the number of its modes.}
of the density $f_{Y^*}$. Subsequently, the $O(A^2)$ bound on $N^*(A)$ was obtained via a complex-analytic method (a variant of Jensen's formula) of bounding the number of zeros of the derivative $f_{Y^*}'$. 

The techniques of Dytso et al.\ only use the fact that $f_{Y^*}$ is a mixture of variance-$1$ Gaussians with means in $[-A,A]$. Thus, they effectively obtained the following result: 
\begin{quote}
Let $\mathsf{m}(A)$ denote the maximum number of modes that any mixture of Gaussian densities with variance $1$ and means in $[-A,A]$ can have. Then, $\mathsf{m}(A) = O(A^2)$.  
\end{quote}
Dytso et al.\ further conjectured \cite[Remark~9]{DYPS20} that $\mathsf{m}(A) = O(A)$, and as a consequence, $N^*(A)=O(A)$. In fact, since they prove that $N^*(A) = \Omega(A)$, their conjecture would have implied that $N^*(A)=\Theta(A)$, thus answering a 50-year-old open question.

The main aim of this paper is to give a proof of the following proposition, which disproves their conjecture on $\mathsf{m}(A)$.

\begin{proposition} 
$\mathsf{m}(A) = \Omega(A^2)$, i.e., $\mathsf{m}(A) \ge c_0 A^2$ for some constant  $c_0>0$ and all $A>0$.
\label{prop:1}
\end{proposition}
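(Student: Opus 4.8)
The plan is to exhibit an explicit mixing measure whose convolution with $\phi$ oscillates rapidly inside a large sub-interval of $[-A,A]$. I would take the mixing density $g(a) = 1 + \cos(\omega a)$ on $[-A,A]$ (renormalized to a probability density; the normalization merely rescales the mixture and does not move its modes), and set $f(t) = \int_{-A}^{A} g(a)\,\phi(t-a)\,da$. The guiding heuristic is that convolution with a unit Gaussian multiplies the frequency-$\omega$ component by $e^{-\omega^2/2}$: if the truncation to $[-A,A]$ were ignored, we would get exactly $\int_{\R}(1+\cos(\omega a))\phi(t-a)\,da = 1 + e^{-\omega^2/2}\cos(\omega t)$, whose derivative $-\omega e^{-\omega^2/2}\sin(\omega t)$ vanishes with alternating sign at the $\Theta(\omega A)$ points $k\pi/\omega$ inside $[-A,A]$. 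Choosing $\omega = \Theta(A)$ would then produce $\Theta(A^2)$ sign changes, roughly half of them local maxima.

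First I would make this heuristic rigorous by writing $f(t) = \big(1 + e^{-\omega^2/2}\cos(\omega t)\big) - R(t)$, where $R(t) = \int_{|a|>A}(1+\cos(\omega a))\phi(t-a)\,da$ is the error from truncating the mixing density. Using $|\phi'(u)| = |u|\phi(u)$ together with the tail identities $\Phi(-c) \le \phi(c)/c$ and $\int_c^\infty u\phi(u)\,du = \phi(c)$, I would bound, for $|t|\le A/2$, both $R(t)$ and $R'(t)$ by $O(e^{-A^2/8})$; the point is that on this range $|a|>A$ forces $|t-a|>A/2$, so the tails are controlled at the level $\phi(A/2)$.

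The crucial step is to choose $\omega$ so that the genuine oscillation dominates this boundary error. Setting $\omega = A/2$, the oscillatory part of $f'$ has amplitude $\omega e^{-\omega^2/2} = \tfrac{A}{2}e^{-A^2/8}$, while $|R'(t)| = O(e^{-A^2/8})$ uniformly on $|t|\le A/2$. Thus for all sufficiently large $A$ the extra polynomial factor $A$ makes the oscillation win: at each midpoint $t_k = (k+\tfrac12)\pi/\omega$ in $[-A/2,A/2]$, where $|\sin(\omega t_k)|=1$, the sign of $f'(t_k)$ equals that of $-\sin(\omega t_k)$, which alternates with $k$. Hence in every other interval $(t_k,t_{k+1})$ the derivative $f'$ passes from positive to negative, producing a local maximum; since there are $\sim \omega A/\pi = A^2/(2\pi)$ midpoints in $[-A/2,A/2]$, this yields $\sim A^2/(4\pi)$ modes, and absorbing small $A$ (where $\mathsf{m}(A)\ge 1$ suffices) into the constant gives $\mathsf{m}(A)\ge c_0 A^2$ for all $A>0$. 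If $\mathsf{m}(A)$ is to be realized by a finite mixture, I would close with a discretization: replacing $g(a)\,da$ by a fine Riemann sum $\sum_j g(a_j)\,\Delta\,\delta_{a_j}$ makes $f$ and $f'$ converge uniformly on $[-A/2,A/2]$, so every constructed sign change of $f'$ persists.

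The main obstacle is exactly the competition in the previous paragraph: both the useful oscillation and the parasitic boundary correction are exponentially small, of the same order $e^{-A^2/8}$ once $\omega=A/2$, so no crude estimate can separate them. The argument hinges on the observation that differentiating $\cos(\omega t)$ attaches an extra factor $\omega=A/2$ to the oscillation that is absent from the tail bound on $R'$; it is this $\Theta(A)$ gap, combined with restricting attention to $[-A/2,A/2]$ where the truncation error is smallest, that makes the construction succeed.
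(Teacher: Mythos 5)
Your proof is correct, but it takes a genuinely different route from the paper, and in fact essentially reproduces the construction of Polyanskiy and Wu that the paper cites as independent contemporaneous work (their Remark~1). The paper instead uses a purely discrete mixing distribution: $2N+1$ equally-weighted point masses at the lattice points $an$, $|n|\le N$, with $a=2\sqrt{\pi}/\sqrt{N}$. It compares the resulting mixture to the fully periodized Gaussian $f_a(x)=\sum_{n\in\Z}\phi(x-an)$, whose oscillation height $h_a=f_a(0)-f_a(a/2)\approx\frac{4}{a}e^{-2\pi^2/a^2}$ is extracted via the Poisson summation formula, and then shows the truncation error is smaller than $\frac12 h_a$ on $[-\frac12 aN,\frac12 aN]$, so each local hill of $f_a$ survives. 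The two arguments are close cousins --- Poisson summation reveals that the lattice mixture is, to leading order, exactly $\frac1a\bigl(1+2e^{-2\pi^2/a^2}\cos(2\pi x/a)\bigr)$, i.e.\ the same sinusoid you inject by hand --- but the trade-offs differ. Your version is conceptually transparent (the frequency-domain heuristic and the $\Theta(\omega)$ gain from differentiating $\cos(\omega t)$ are exactly the right way to see why $\omega=\Theta(A)$ is the threshold) and needs only elementary Gaussian tail bounds; however, it argues at the level of $f'$ and requires a final discretization step to produce a finite mixture, and the choice $\omega=A/2$ wins only by a polynomial factor $A$ over the truncation error. The paper's version works directly at the level of function values (no derivatives), produces a finite, equally-weighted, equally-spaced mixture from the outset --- which is the form relevant to the channel-capacity motivation --- and its comparison constant $e^{-2\pi}/(1-e^{-2\pi})\approx 0.0019$ is uniform in $N$, so no ``sufficiently large $A$'' caveat is needed beyond absorbing small $A$ into the constant, just as you do. Both proofs are valid; yours would be a legitimate alternative write-up of the result.
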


Thus, in conjunction with the result of Dytso et al., we have that $\mathsf{m}(A) = \Theta(A^2)$. While our result effectively blocks this particular route to proving that $N^*(A) = \Theta(A)$, numerical work does indeed suggest that this is the right order of growth of $N^*(A)$ with $A$.

\begin{remark}
Independently of us, but subsequent to us, Polyanskiy and Wu \cite{PW20} have obtained a slightly weaker result that also essentially disproves the conjecture of Dytso et al. They give an example of a random variable $X$ having a \emph{density} $\pi$ supported within $[-A,A]$ such that the density, $\pi*\phi$, of $X+Z$ has $\Omega(A^2)$ modes \cite[Section~5.4]{PW20}. We, on the other hand, construct a \emph{discrete} random variable $X$ such that the density of $X+Z$ has $\Omega(A^2)$ modes. Indeed, our distribution on $X$ consists of a finite number of equally-weighted and equally-spaced point masses, and it is interesting to see that such a simple mixing distribution suffices to generate an order-optimal number of modes. By way of contrast, the density $\pi$ considered by Polyanskiy and Wu has a sinusoidal shape.
\end{remark}

The result of Proposition~\ref{prop:1} does not change qualitatively if we further impose a variance constraint on the mixing distribution. To be precise, consider now Gaussian mixtures $f_Y(t)=\sum_{k=1}^Np_k\phi(t-a_k)$, with centres $a_k$ again constrained to be in $[-A,A]$, but additionally requiring the random variable $X \sim  \sum_{k=1}^N p_k \delta_{a_k}$  to have variance $\mathrm{var}(X) \le 1$. (Of course, any constant bound on the variance will do; we take the bound to be $1$ for simplicity.) Addition of a variance constraint is motivated by considerations similar to those outlined above, but now arising from the study of the amplitude- and variance-constrained capacity of the additive Gaussian noise channel \cite{DYPS20},\cite{Smith71}:
$$
C(A,\sigma^2) = \max_{P_X: \text{supp}(P_X) \subseteq [-A,A] \atop \text{var}(P_X) \le \sigma^2} I(X;Y),
$$

Let $\mathsf{m}_{\#}(A)$ denote the maximum number of modes among densities $f_Y$ obtained as mixtures of Gaussian densities with variance $1$ and means in $[-A,A]$, where additionally the mixing distribution has variance bounded above by $1$. We then have the following result.

\begin{proposition}
$\mathsf{m}_{\#}(A) = \Omega(A^2)$, i.e., $\mathsf{m}_{\#}(A) \ge c_{\#} A^2$ for some constant  $c_{\#}>0$ and all $A>0$.
\label{prop:2}
\end{proposition}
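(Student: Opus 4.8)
The plan is to recycle the construction underlying Proposition~\ref{prop:1} and then repair its variance by concentrating almost all of the mixing mass at a single point. Recall that the mixture witnessing Proposition~\ref{prop:1} is built from an equally weighted grid measure $\nu_\delta = \frac1N\sum_k \delta_{k\delta}$, supported on $N \asymp A/\delta$ equally spaced atoms filling $[-A,A]$ with spacing $\delta$. By Poisson summation its density $\nu_\delta*\phi$ is, in the bulk, a flat background $\approx \frac{1}{2A}$ carrying a tiny but genuine ripple of period $\delta$ and relative amplitude $\asymp e^{-2\pi^2/\delta^2}$; because a flat background has vanishing slope, each ripple period produces an honest local maximum, and choosing $\delta\asymp 1/A$ yields $\Omega(A^2)$ of them. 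The obstruction to reusing this directly is that $\mathrm{var}(\nu_\delta)\asymp A^2$, so $\nu_\delta$ is inadmissible for $\mathsf m_{\#}$.

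First I would dilute the grid by a central atom: set $\mu = (1-\epsilon)\delta_0 + \epsilon\,\nu_\delta$ with $\epsilon\asymp 1/A^2$. Since $\mathrm{supp}(\nu_\delta)\subseteq[-A,A]$ gives $\E_{\nu_\delta}[X^2]\le A^2$ and both measures are symmetric about $0$, we get $\mathrm{var}(\mu)\le \epsilon A^2\le 1$ for a suitable implied constant, so $\mu$ is admissible. Its density is $f_\mu = (1-\epsilon)\phi + \epsilon\,(\nu_\delta*\phi)$, that is, the Proposition~\ref{prop:1} density scaled down by $\epsilon$, with a single large central Gaussian added on top.

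Next I would determine which ripples survive the addition of $(1-\epsilon)\phi$. The derivative of the surviving ripple has magnitude $\asymp \frac{\epsilon}{A\delta}e^{-2\pi^2/\delta^2}$, and it must dominate two competing slopes: the slope $\asymp |t|\,e^{-t^2/2}$ of the added central Gaussian, and the finite-grid edge-correction slope $\asymp r\,e^{-r^2/2}$ already present in Proposition~\ref{prop:1}, where $r$ is the distance to the nearer endpoint $\pm A$. Both competitors are Gaussian-small, and up to polynomial prefactors each falls below the ripple slope precisely once the relevant exponentials balance, i.e.\ $e^{-t^2/2}\approx e^{-2\pi^2/\delta^2}$, that is $|t|\approx 2\pi/\delta$. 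Hence on the annulus where $2\pi/\delta \le |t| \le A-2\pi/\delta$ (up to a lower-order $\log A$ shift) the ripple wins, forcing a sign change of $f_\mu'$ in each period and thus a local maximum there. This annulus has total length $2(A-4\pi/\delta)$ and contains $\asymp 2A/\delta - 8\pi/\delta^2$ periods; optimizing this over $\delta$ gives $\delta\asymp 1/A$ and retains $\Omega(A^2)$ modes, which proves the proposition.

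The main obstacle will be making ``the ripple wins'' rigorous and uniform over the annulus. This requires a clean lower bound on the leading Poisson harmonic of $\nu_\delta*\phi$, matched against uniform upper bounds on everything else, namely the higher harmonics, the finite-$N$ edge corrections, and the derivative of the added Gaussian, followed by an argument converting slope domination into $\Omega(A^2)$ genuine sign changes of $f_\mu'$. The delicate region is near the two ends of the good annulus, where the ripple slope and the two Gaussian-small competitors are of comparable order and the crude $\asymp$ estimates must be sharpened; well inside the annulus the margins are comfortable.
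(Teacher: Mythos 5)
Your plan is sound and would yield a correct proof, but it takes a genuinely different route from the paper's. Both arguments share the same first move --- dilute the grid measure by a factor $\epsilon\asymp 1/A^2$ and dump the remaining mass $1-\epsilon$ on a central atom at $0$, which fixes the variance --- but they diverge in how they handle the resulting competition between the big central Gaussian and the exponentially small ripple. You keep the grid centred on $[-A,A]$ and fight that competition head-on: the slope $|t|\phi(t)$ of the central Gaussian beats the ripple slope $\asymp \frac{\epsilon}{A\delta}e^{-2\pi^2/\delta^2}$ for $|t|\lesssim 2\pi/\delta$, so you must restrict to an annulus and verify the ripple wins there, which is your acknowledged ``delicate region.'' The paper instead sidesteps the competition entirely by translating the two halves of the grid away from the origin, to $[aN,3aN]$ and $[-3aN,-aN]$ with $a=2\sqrt{\pi}/\sqrt{N}$: on the interval $[3\sqrt{\pi N},5\sqrt{\pi N}]$ where the modes are counted, the central Gaussian contributes only $O(N^2 e^{-\frac92\pi N})$ after renormalization, which is negligible against the ripple height $h_a\asymp\sqrt{N}\,e^{-\frac12\pi N}$, so the perturbation lemma from Proposition~\ref{prop:1} (which compares function values, not slopes, and thus needs no derivative estimates) applies verbatim and there is no delicate region at all. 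Your route buys a construction that literally reuses the Proposition~\ref{prop:1} measure at the cost of a sharper analysis; the paper's buys a two-line reduction at the cost of a slightly less symmetric-looking mixture. One caveat you should make explicit when writing your version up: the constant in $\delta\asymp 1/A$ is not free. The surviving annulus $[2\pi/\delta,\,A-2\pi/\delta]$ is empty unless $\delta>4\pi/A$, so unlike in Proposition~\ref{prop:1} you must choose the spacing with a sufficiently large constant (your optimization correctly lands at $\delta=8\pi/A$, retaining $\asymp A^2/(8\pi)$ periods), and the lower edge of the annulus must additionally be pushed out by an $O(\log A/A)$ correction to absorb the polynomial prefactors --- this costs only $O(\log A)$ periods, so the $\Omega(A^2)$ count survives.
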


Note that this result is stronger than that of Proposition~\ref{prop:1}: we clearly have $\mathsf{m}(A) \ge \mathsf{m}_{\#}(A)$, as the latter is a maximum over a more constrained set of densities.
 
Our results carry over to higher dimensions without substantial change. In this paper, we extend the use of the notation $| \cdot |$ to include the $\ell^\infty$-norm on $\R^d$: for a vector $x = (x_1,\ldots,x_d) \in \R^d$, $|x| = \max_i |x_i|$. Let $\phi_{d}$ denote the standard Gaussian density (zero mean and identity covariance) in $\R^{d}$.  Let $\mathsf{m}_{d}(A)$ denote the maximum number of modes that the Gaussian mixture density $f(t)=p_{1}\phi_{d}(t-a_{1})+\ldots +p_{N}\phi_{d}(t-a_{N})$ can have, subject to the constraints that $|a_k|\le A$ for all $i$, and $p_k>0$ sum to $1$. 
\begin{proposition}\label{prop:3} With the above notation, $\mathsf{m}_{d}(A)\ge c_d A^{2d}$ for a constant $c_d>0$ and all $A > 0$. 
\end{proposition}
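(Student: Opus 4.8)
The plan is to obtain the $d$-dimensional example as a tensor product of the one-dimensional construction behind Proposition~\ref{prop:1}, exploiting the fact that the standard Gaussian density factorizes and that Gaussian mixtures are everywhere positive. Let $\mu = \sum_{k=1}^{N} p_k \delta_{a_k}$ be the mixing distribution produced for Proposition~\ref{prop:1}, with each $a_k \in [-A,A]$, so that the density $f(s) = \sum_k p_k \phi(s - a_k)$ has $m \ge c_0 A^2$ modes. I would take as the $d$-dimensional mixing distribution the product measure $\mu^{\otimes d}$, whose atoms are the points $(a_{k_1}, \ldots, a_{k_d})$ with weights $p_{k_1} \cdots p_{k_d}$. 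Since each coordinate lies in $[-A,A]$, every such atom $a$ satisfies $|a| = \max_i |a_{k_i}| \le A$; it is precisely here that the $\ell^\infty$-box $[-A,A]^d$ (rather than, say, a Euclidean ball) makes the product construction legitimate. Because $\phi_d(x) = \prod_{i=1}^{d} \phi(x_i)$, the resulting mixture density factorizes as
\begin{equation}
F(t) = \prod_{i=1}^{d} f(t_i), \qquad t = (t_1, \ldots, t_d) \in \R^d,
\end{equation}
so that $F$ is a valid competitor in the definition of $\mathsf{m}_{d}(A)$.

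It then remains to count the modes of $F$. Let $s_1^* < s_2^* < \cdots < s_m^*$ denote the modes of $f$. For any choice of indices $(j_1, \ldots, j_d) \in \{1, \ldots, m\}^d$, I claim the point $t^* = (s_{j_1}^*, \ldots, s_{j_d}^*)$ is a local maximum of $F$. Indeed, each factor $f$ is strictly positive, and on a small product neighborhood of $t^*$ we have $0 < f(t_i) \le f(s_{j_i}^*)$ for every coordinate; multiplying these inequalities (all quantities being positive) gives $F(t) \le F(t^*)$ there. As the points $t^*$ are distinct for distinct index tuples, this exhibits at least $m^d$ modes of $F$, whence
\begin{equation}
\mathsf{m}_{d}(A) \ge m^d \ge (c_0 A^2)^d = c_0^d\, A^{2d},
\end{equation}
proving the claim with $c_d = c_0^d$.

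Since the paper itself remarks that the result ``carries over without substantial change,'' I do not anticipate a genuine obstacle here. The two points to handle with a little care are exactly those used implicitly above: first, that the positivity of the univariate mixture $f$ is what lets one pass from coordinatewise local maxima to a local maximum of the product (this is what makes products of modes themselves modes, and it is cleanest to argue directly with the product rather than via $\log F$); and second, that the constraint on the means is in the $\ell^\infty$-norm, which is precisely what the product construction preserves. One could optionally firm up the bookkeeping by checking that the critical points of $F$ are \emph{exactly} the products of critical points of $f$ --- using $\partial_{t_i} F = f'(t_i)\prod_{j \ne i} f(t_j)$ together with the non-vanishing of the factors --- but this refinement is not needed for the lower bound.
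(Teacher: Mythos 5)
Your proof is correct, but it takes a genuinely different (and more elementary) route than the paper. The paper re-runs its one-dimensional machinery in $\R^d$: it applies the $d$-dimensional Poisson summation formula to the lattice sum $f_a(x)=\sum_{n\in\Z^d}\phi_d(x-na)$, extracts a hill height $h_a$ from the cosine expansion, bounds the truncation error $|f_a-f_{a,N}|$ over the cube $\{|x|\le\frac12 aN\}$, and invokes the perturbation argument cube by cube. You instead observe that the natural $d$-dimensional example is a tensor product: since $\phi_d(x)=\prod_{i}\phi(x_i)$, the mixture with mixing distribution $\mu^{\otimes d}$ has density $F(t)=\prod_i f(t_i)$, and the strict positivity of $f$ lets you multiply the coordinatewise inequalities $0<f(t_i)\le f(s^*_{j_i})$ to conclude that every $d$-tuple of one-dimensional modes is a mode of $F$, giving $m^d\ge (c_0A^2)^d$ modes; the $\ell^\infty$ constraint on the means is preserved precisely because the feasible region is a box. (Note that your construction and the paper's are the same mixture --- equal weights on the grid $a\,Q_N$ --- only the mode-counting argument differs.) Your argument buys simplicity and generality: it shows that \emph{any} univariate mixture with $m$ modes tensorizes to a $d$-variate mixture with at least $m^d$ modes, with no further analysis and no $d$-dimensional Poisson summation. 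The paper's approach retains quantitative control on hill heights and on the locations of the modes (near the lattice points $na$), which would be needed if the example were not a product. The only bookkeeping worth making explicit is that the $m^d$ product points are distinct local maxima; this is immediate since they lie in pairwise disjoint product neighborhoods, and your optional remark about the critical points of $F$ being exactly products of critical points of $f$ is indeed unnecessary for the lower bound.
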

However, we are not aware of a corresponding upper bound. It is worth remarking here that, besides the information-theoretic considerations that motivate us, there is considerable mathematical interest in counting modes of Gaussian mixtures --- cf.\ \cite{AEH19}, \cite{CW03}, and the references therein. For instance, it was conjectured by Sturmfels (see \cite[Conjecture~5]{AEH19}) that a Gaussian mixture (with identity covariance matrices, as we have taken) with $N$ components in $\R^d$ has at most $\binom{N+d-1}{d}$ modes. In one dimension, this bound reduces to $N$, a fact first proved in \cite{Sil81} --- see also \cite[Section~2.4]{CW03}. These studies do not put any constraint on the means of the Gaussians involved in the mixture, while the uniform boundedness of the means is a key feature of our results.

\subsection*{\emph{A sketch of the proofs.}} The main ingredients in our proofs of Propositions~\ref{prop:1} and \ref{prop:2} are mixtures of the form 
\begin{align}
\gamma_{a,N}(x) := \frac{1}{2N+1} \sum_{n=-N}^N \phi(x-an),
\label{def:gamma}
\end{align}
with $a > 0$. This is an equally-weighted  mixture of $2N+1$ Gaussians with equally-spaced centres (means) $an$, for integers $n$ between $-N$ and $N$. Fig.~\ref{fig:gaussmix} illustrates the shape of the unnormalized mixture 
\begin{align}
f_{a,N}(x) := \sum_{n=-N}^N \phi(x-an).
\label{def:faN}
\end{align}

\begin{figure}[t]
\centering
\includegraphics[width=0.8\textwidth]{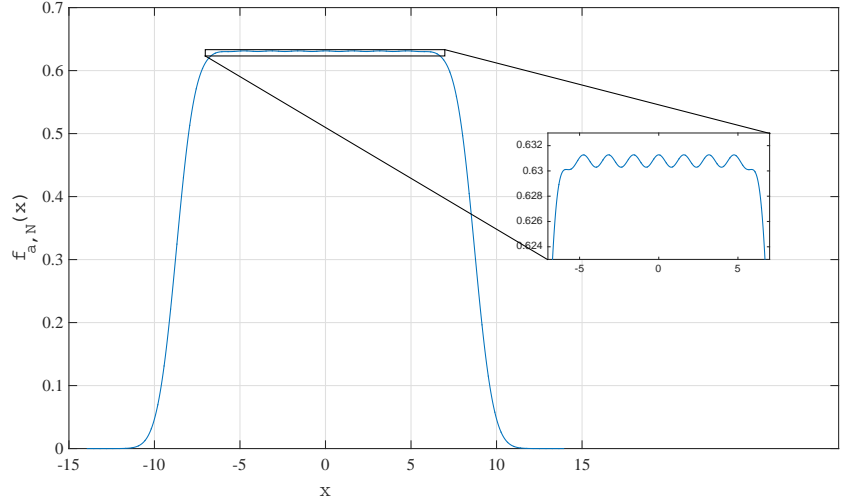}
\caption{A plot of $f_{a,N}(x) = \sum_{n=-N}^N \phi(x-an)$ for $N = 5$ and $a = 2\sqrt{\pi/N}$.}
\label{fig:gaussmix}
\end{figure}

We will show that by choosing $a = \frac{c}{\sqrt{N}}$ for a suitable constant $c > 0$, the resulting unnormalized mixture $f_{a,N}$ has centres in $[-c\sqrt{N},c\sqrt{N}]$ and at least $N-1$ modes. Since scaling by a constant has no effect on the number of modes, the same holds for the mixture $\gamma_{a,N}$, which suffices to prove Proposition~\ref{prop:1}. The proof is elaborated in Section~\ref{sec:proofoffirstsecondproposition}.

For Proposition~\ref{prop:2}, we work with the mixture 
\begin{align}
\Gamma_{\alpha; \, a,N}(x) & := (1-2\alpha) \, \phi(x) + \alpha \, \gamma_{a,N}(x+2aN) + \alpha \, \gamma_{a,N}(x-2aN) \notag \\
& = (1-2\alpha) \, \phi(x) + \frac{\alpha}{2N+1} \sum_{n=-3N}^{-N} \phi(x-an) + \frac{\alpha}{2N+1} \sum_{n=N}^{3N} \phi(x-an),
\label{def:Gamma}
\end{align}
where $a = \frac{c}{\sqrt{N}}$ is as above, and $\alpha \in (0,\frac12)$. This is a Gaussian mixture with centres at $0$ and $\pm an$, $n = N,N+1,\ldots,3N$, weighted by $1-2\alpha$ and $\frac{\alpha}{2N+1}$, respectively. It is easy to check that by taking $\alpha \sim \frac{1}{N}$, we can get the underlying random variable $X$ to have variance at most $1$. We will, moreover, show that for this choice of $\alpha$, the mixture $\Gamma_{\alpha; \, a,N}$ has $\Omega(N)$ modes. Since $\Gamma_{\alpha; \, a,N}$ has all its centres within $[-3c\sqrt{N},3c\sqrt{N}]$, this will prove Proposition~\ref{prop:2}. The detailed proof is in Section~\ref{sec:proofoffirstsecondproposition}.

The proof of Proposition~\ref{prop:3} is entirely analogous to that of Proposition~\ref{prop:1}, and uses a mixture with equal weights and centers at $ak$, where $k=(k_{1},\ldots ,k_{d})\in \Z^{d}$ with $-N\le k_{i}\le N$, for  appropriately chosen $a$ and $N$ (the right choices turn out to be $a=1/A$ and $N=A^{2}$).  Details are in Section~\ref{sec:proofofthirdproposition}.

\section{Proof of Proposition~\ref{prop:1} and Proposition~\ref{prop:2}}
\label{sec:proofoffirstsecondproposition}
As mentioned in the proof sketch above, Proposition~\ref{prop:1} is proved by considering equal-weighted mixtures $\gamma_{a,N}$, as defined in \eqref{def:gamma}, for a suitable choice of $a$.
Our analysis is based on the fact that, for any $a > 0$, the unnormalized mixture $f_{a,N}$ defined in \eqref{def:faN} is a truncation of the infinite series 
\begin{align*}
f_a(x):=\sum_{n\in \Z}\phi(x-an).
\end{align*}
Note that $f_a$ is well-defined and periodic with period $a$. By standard real-analysis arguments, $f_a$ is continuous on $\R$.

We first obtain an estimate for $h_a := f_a(0) - f_a(\frac{a}{2})$, which we will use in our proofs. 
\begin{lemma}
For any $a > 0$, we have
\begin{align*}
\frac{4}{a}e^{-\frac{2\pi^2}{a^2}} \ \le \  h_a \ \le \ \frac{4}{a}e^{-\frac{2\pi^2}{a^2}} \, {\left(1-e^{-\frac{2\pi^2}{a^2}}\right)}^{-1}. 
\end{align*}
\label{lem:ha}
\end{lemma}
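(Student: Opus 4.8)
The plan is to compute $h_a$ \emph{exactly} via Poisson summation (equivalently, by Fourier-expanding the periodic function $f_a$) and then to read off the two inequalities directly from the resulting series. The point is that $f_a$ has period $a$, so its slow decay in physical space --- precisely what makes estimates involving $e^{-2\pi^2/a^2}$ the natural scale, especially for small $a$ --- is best captured on the Fourier side, where the decay is Gaussian in the dual variable $k/a$.

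First I would record the Fourier coefficients of $f_a$. Writing $f_a(x)=\sum_{k\in\Z}c_k\,e^{2\pi i k x/a}$, unfolding the periodization over one period gives $c_k=\frac1a\int_{\R}\phi(x)\,e^{-2\pi i kx/a}\,dx=\frac1a\,\widehat\phi(k/a)$, and the standard Gaussian integral yields $\widehat\phi(\xi)=e^{-2\pi^2\xi^2}$. Since $\phi$ is a Schwartz function, both the interchange of sum and integral in the unfolding and the pointwise convergence of the Fourier series are unproblematic; this is the one place I would take some care, but it is routine. Because $\phi$ is real and even, the series collapses to $f_a(x)=\frac1a\bigl(1+2\sum_{k\ge1}e^{-2\pi^2 k^2/a^2}\cos(2\pi kx/a)\bigr)$.

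Next I would evaluate at the two points. At $x=0$ every cosine equals $1$, while at $x=a/2$ one has $\cos(\pi k)=(-1)^k$, so the even-$k$ terms cancel in the difference and only the odd $k$ survive, doubling:
$$
h_a=f_a(0)-f_a\!\left(\tfrac{a}{2}\right)=\frac4a\sum_{j\ge0}e^{-2\pi^2(2j+1)^2/a^2}.
$$
The lower bound is then immediate: every summand is positive, so retaining only the $j=0$ term gives $h_a\ge\frac4a e^{-2\pi^2/a^2}$.

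For the upper bound, set $q=e^{-2\pi^2/a^2}\in(0,1)$, so that $h_a=\frac4a\sum_{j\ge0}q^{(2j+1)^2}$. Since $(2j+1)^2\ge j+1$ for all $j\ge0$ and $0<q<1$, I would dominate the series by the geometric series $\sum_{j\ge0}q^{\,j+1}=\frac{q}{1-q}$, which is exactly $e^{-2\pi^2/a^2}\bigl(1-e^{-2\pi^2/a^2}\bigr)^{-1}$. The only (minor) obstacle is choosing a comparison exponent that reproduces the stated constant cleanly rather than a weaker one such as $\frac{q}{1-q^2}$ obtained from $(2j+1)^2\ge 2j+1$; the sharper elementary bound $(2j+1)^2\ge j+1$ does the job. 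Combining the two estimates yields the claimed two-sided inequality.
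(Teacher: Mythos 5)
Your proof is correct and follows essentially the same route as the paper: Poisson summation gives the Fourier expansion of $f_a$, the difference $h_a$ collapses to $\frac{4}{a}\sum_{\text{odd }n>0}e^{-2\pi^2n^2/a^2}$, the lower bound keeps the first term, and the upper bound dominates by a geometric series. The only (immaterial) difference is in the upper bound: you compare the exact odd-frequency series directly to $\sum_j q^{j+1}$ via $(2j+1)^2\ge j+1$, whereas the paper bounds $|f_a(0)-\frac1a|$ and $|f_a(\frac a2)-\frac1a|$ separately using the full sum over $n\neq 0$ with $n^2\ge|n|$; both yield the same constant.
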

\begin{proof}
We prove the lower bound first. By the Poisson summation formula\footnote{With the notation $\hat{f}(\lambda)=\int f(x) \, e^{-2\pi i \lambda x}dx$, we have $\sum\limits_{n\in \Z}f(x+n)=\sum\limits_{n\in \Z}\hat{f}(n)e^{2\pi i nx}$.}, for any $x \in \R$,
\begin{equation}
\label{eq:Poisson}
f_a(x)=\sum\limits_{n\in \Z}\phi\biggl(a\bigl(\frac{x}{a}-n\bigr)\biggr) = \frac{1}{a}\sum\limits_{n\in \Z} e^{-\frac{2\pi^2n^2}{a^2}}e^{2\pi i n \frac{x}{a}},
\end{equation}
from which we get
$$f_a(0) \ = \ \frac{1}{a} \sum\limits_{n\in \Z} e^{-\frac{2\pi^2n^2}{a^2}} > \frac{1}{a} \ > \ 
\frac{1}{a} \sum\limits_{n\in \Z} (-1)^n \, e^{-\frac{2\pi^2n^2}{a^2}} \ = \ f_a\left(\frac{a}{2}\right).$$
In particular, we have 
$$
h_a \ = \ f_a(0) -f_a\left(\frac{a}{2}\right) 
\ = \ \frac{2}{a} \sum\limits_{n \in \Z, \atop n \text{ odd}} e^{-\frac{2\pi^2n^2}{a^2}} 
\ = \ \frac{4}{a} \sum\limits_{n > 0, \atop n \text{ odd}} e^{-\frac{2\pi^2n^2}{a^2}} 
\ > \ \frac{4}{a} e^{-\frac{2\pi^2}{a^2}}.
$$

For the upper bound, consider
\begin{align*}
\left|f_a(x)-\frac{1}{a}\right| \ \le \ \frac{1}{a} \sum_{n\not=0}  e^{-\frac{2\pi^2n^2}{a^2}} 
\ \le \ \frac{2e^{-\frac{2\pi^2}{a^2}}}{a\left(1-e^{-\frac{2\pi^2}{a^2}}\right)},
\end{align*}
the first inequality arising from \eqref{eq:Poisson}, and the second inequality being obtained by replacing $n^2$ by $n$ to get a geometric series. Thus,
\begin{align}
h_a \ = \ \left|f_a(0)-\frac{1}{a}\right| + \left|f_a\left({\frac{a}{2}}\right) - \frac{1}{a}\right| \ \le \ \frac{4e^{-\frac{2\pi^2}{a^2}}}{a\left(1-e^{-\frac{2\pi^2}{a^2}}\right)},
\label{ha_upbnd}
\end{align}
which is the claimed upper bound.
\end{proof}

Thus, for $a \ll 1$, we have $h_a \approx \frac{4}{a} \exp(-\frac{2\pi^2}{a^2})$. We actually need only the lower bound on $h_a$ for our arguments. 

\begin{remark}
A minor modification in the above proof shows that the bounds in Lemma~\ref{lem:ha} in fact apply to $\overline{h}_a = \max(f_a) - \min(f_a)$ as well. Indeed, the lower bound is obvious, since $\overline{h}_a \ge h_a$. For the upper bound, we observe that if $x^*$ and $x_*$ achieve the maximum and minimum, respectively, of $f_a$, then $\overline{h}_a = |f_{a}(x^*)-\frac1a| + |f_{a}(x_*)-\frac1a|$, so that the upper bound in \eqref{ha_upbnd} still holds.
\end{remark}

It is clear from \eqref{eq:Poisson} that  $f_{a}(0)>f_{a}(x)$ for all $x\in [-\frac{a}2,\frac{a}2]$, since there is non-trivial cancellation in the terms of the series unless $x$ is an integer multiple of $a$. By the fact that $f_a$ has period $a$, we see that $na$ is a strict maximum of $f_{a}$ in the interval $I_{a,n} := [na-\frac{a}2,na+\frac{a}2]$ for any $n \in \Z$. We wish argue that $f_{a,N}$ also has local maxima within those intervals $I_{a,n}$ that are contained in $[-\frac12 aN,\frac12 aN]$. For this, we will need the simple lemma stated next.



\begin{lemma}
Let $g$ be a continuous function such that $|f_a - g| < \frac12 h_a$ on a subset $S \subseteq \R$. Then, $g$ has a local maximum in the interior of any interval $I_{a,n}$ that is contained within $S$.
\label{lem:perturb}
\end{lemma}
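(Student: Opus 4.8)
The plan is to exploit the fact that $\frac12 h_a$ is precisely the threshold separating the peak value of $f_a$ from its trough value on $I_{a,n}$, so that the $\frac12 h_a$-closeness of $g$ to $f_a$ forces $g$ to stay high at the center of $I_{a,n}$ while dipping below that level at both endpoints. First I would record the behaviour of $f_a$ on $I_{a,n}$. By the period-$a$ property we have $f_a(na)=f_a(0)$ and $f_a(na\pm\frac{a}{2})=f_a(\frac{a}{2})$, and it has already been observed that $na$ is the strict maximum of $f_a$ on $I_{a,n}$; hence $f_a(0)>f_a(\frac{a}{2})$ and $h_a=f_a(0)-f_a(\frac{a}{2})>0$.

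Next I would apply the hypothesis $|f_a-g|<\frac12 h_a$, valid on $S\supseteq I_{a,n}$ and in particular at the three points $na,\,na\pm\frac{a}{2}\in I_{a,n}$. At the center this gives the lower bound $g(na)>f_a(0)-\frac12 h_a$, while at each endpoint it gives the upper bound $g(na\pm\frac{a}{2})<f_a(\frac{a}{2})+\frac12 h_a$. The crux of the argument --- and the reason $\frac12 h_a$ is exactly the right quantity --- is the elementary identity
\begin{align*}
f_a(0)-\tfrac12 h_a \;=\; f_a\!\left(\tfrac{a}{2}\right)+\tfrac12 h_a \;=\; \tfrac12\!\left(f_a(0)+f_a\!\left(\tfrac{a}{2}\right)\right),
\end{align*}
which follows at once from $h_a=f_a(0)-f_a(\frac{a}{2})$. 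Combining it with the two bounds above yields
\begin{align*}
g(na) \;>\; \tfrac12\!\left(f_a(0)+f_a\!\left(\tfrac{a}{2}\right)\right) \;>\; g\!\left(na\pm\tfrac{a}{2}\right),
\end{align*}
so the value of $g$ at the center of $I_{a,n}$ strictly exceeds its values at both endpoints.

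Finally, since $g$ is continuous on the compact interval $I_{a,n}$, it attains its maximum over $I_{a,n}$ at some point $x^*$; the strict inequalities just established rule out both endpoints, so $x^*$ lies in the open interval $(na-\frac{a}{2},\,na+\frac{a}{2})$. As $x^*$ is interior, some neighbourhood of $x^*$ is contained in $I_{a,n}$, on which $g(x^*)$ dominates, and therefore $x^*$ is a local maximum of $g$ lying in the interior of $I_{a,n}$, as required. I do not anticipate any genuine obstacle: the only point that must be gotten right is the midpoint identity, which is exactly what makes the constant $\frac12 h_a$ --- rather than any smaller multiple --- both necessary and sufficient for the conclusion.
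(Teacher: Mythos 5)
Your proof is correct and is essentially the same as the paper's: both compare $g(na)$ against $g(na\pm\frac{a}{2})$ using the $\frac12 h_a$-closeness to $f_a$ together with $f_a(na)-f_a(na\pm\frac{a}{2})=h_a$, and then conclude that the maximum of $g$ on the compact interval $I_{a,n}$ must be attained at an interior point. The paper writes this as a telescoping three-term sum while you phrase it via the midpoint identity, but the underlying estimate is identical.
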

\begin{proof}
Recall that $I_{a,n} = [na-\frac{a}2,na+\frac{a}2]$, for $n \in \Z$. If $|f_a - g| < \frac12  h_a$ holds on $I_{a,n}$, then we have 
\begin{align*}
g(na) - g(na-{\textstyle \frac{a}2)} & = \bigl(g(na)-f_a(na)\bigr) + \bigl(f_a(na)-f_a(na-{\textstyle \frac{a}2} )\bigr)  + \bigl(f_a(na-{\textstyle \frac{a}2} )-g(na-{\textstyle \frac{a}2} )\bigr) \\
& > \bigl(-\frac12 h_a\bigr) + h_a + \bigl(-\frac12 h_a\bigr) \\
& = 0.
\end{align*}
Hence, $g(na) > g(na-\frac{a}2)$. Analogously, $g(na) > g(na+\frac{a}2)$. Therefore, the global maximum of $g$ in $I_{n,a}$ is attained at and interior point. In particular, $g$ has a local maximum strictly between $na-\frac{a}2$ and $na+\frac{a}2$.
\end{proof}

%

We now have the facts necessary to furnish proofs of Propositions~\ref{prop:1} and \ref{prop:2}.

\begin{proof}[Proof of Proposition~\ref{prop:1}]
We apply Lemma~\ref{lem:perturb} with $g=f_{a,N}$. Note first that 
\begin{align*}
|f_a(x)-f_{a,N}( x)| &= \frac{1}{\sqrt{2\pi}} \sum_{n:|n|>N}e^{-\frac12 (an-x)^2} \\
&\le \frac{1}{\sqrt{2\pi}} \sum_{n:|n|>N}e^{-\frac12 (a|n|-|x|)^2} \;\;\; \bigl(\mbox{since } |an-x| \ge \bigl|a|n|-|x|\bigr|\bigr) \\
&= \frac{2}{\sqrt{2\pi}}  \sum_{n>N}e^{-\frac12 (an-|x|)^2} \\
&= \frac{2}{\sqrt{2\pi}}  \, e^{-\frac12(aN-|x|)^2} \sum_{n>N}e^{-\frac{1}{2}a(n-N)(a(N+n)-2|x|)} \\
&\le \frac{2}{\sqrt{2\pi}}  \, e^{-\frac12(aN-|x|)^2} \sum_{n>N}e^{-a(n-N)(aN-|x|)} \; . 
\end{align*}
%
%
Now take $|x|\le \frac12aN$ to get 
\begin{align}
|f_a(x)-f_{a,N}( x)| &\le  \frac{2}{\sqrt{2\pi}} \, e^{-\frac18 a^2N^2} \sum_{n>N} e^{-\frac12a^2N(n-N)} \notag \\
&= {\frac{2}{\sqrt{2\pi}} \, e^{-\frac18 a^2N^2}} \, \frac{e^{-\frac12a^2N}}{1-e^{-\frac12a^2N}} \; .
\label{maxdev_bnd}
\end{align}

If we take $a=\frac{2\sqrt{\pi}}{\sqrt{N}}$ and $S=[-\frac12aN,\frac12aN] = [-\sqrt{\pi N},\sqrt{\pi N}]$, then \eqref{maxdev_bnd} holds for all $x\in S$, so that
\begin{equation}
|f_a(x)-f_{a,N}( x)| \ \le \ C_0 e^{-\frac12\pi N}
\label{ineq:C0}
\end{equation}
with $C_0 = \frac{2}{\sqrt{2\pi}}\left(\frac{e^{-2\pi}}{1-e^{-2\pi}}\right)$.
On the other hand, from the lower bound for $h_a$ in Lemma~\ref{lem:ha}, we have
$$
h_a \ge 2 \, \sqrt{\frac{N}{\pi}} \, e^{-\frac12 \pi N}.
$$

As $C_0 < \frac{2}{\sqrt{\pi}}\left(\frac{e^{-2\pi}}{1-e^{-2\pi}}\right)$, we have for all $N \ge 1$,
$C_0 e^{-\frac12\pi N} < \left(\frac{e^{-2\pi}}{1-e^{-2\pi}}\right) h_a$, and consequently,
\begin{align*}
|f_a(x)-f_{a,N}(x)| <  \left(\frac{e^{-2\pi}}{1-e^{-2\pi}}\right) h_a \ \ \text{ for all } x \in S.
\end{align*}
Since $\frac{e^{-2\pi}}{1-e^{-2\pi}} \approx 0.0019$, the conclusion of Lemma~\ref{lem:perturb} holds, i.e., $f_{a,N}$ has a local maximum in the interior of each of the intervals $I_{a,n}$ contained in $S = [-\frac12aN,\frac12aN]$. There are at least $N-1$ such intervals $I_{a,n}$, and hence, $f_{a,N}$ has at least $N - 1$ local maxima within $S$. Thus, we conclude that the Gaussian mixture $\gamma_{a,N} = \frac{1}{2N+1} f_{a,N}$ (with $a=\frac{2\sqrt{\pi}}{\sqrt{N}}$), which has all its centres inside $[-2\sqrt{\pi N},2\sqrt{\pi N}]$, has at least $N - 1$ modes (within $S = [-\sqrt{\pi N},\sqrt{\pi N}]$). Choosing $N=A^{2}$ proves Proposition~\ref{prop:1}. 
\end{proof}

\bigskip

\begin{proof}[Proof of Proposition~\ref{prop:2}]
Consider $\Gamma_{\alpha;a,N}$ as defined in \eqref{def:Gamma}, with $a = \frac{2\sqrt{\pi}}{\sqrt{N}}$ as in the proof of Proposition~\ref{prop:1}. This is the density of $Y = X+Z$, where $Z \sim \mathcal{N}(0,1)$ is independent of $X \sim (1-2\alpha) \delta_0 + \frac{\alpha}{2N+1} \sum_{n=N}^{3N} (\delta_{-an} + \delta_{an})$. We then have 
\begin{align*}
\mathrm{var}(X) &= \frac{\alpha}{2N+1} \sum_{n=N}^{3N} 2(an)^2  \\
& \le \frac{2\alpha a^2}{2N+1} \sum_{n=1}^{3N} n^2 \\
& = \frac{2\alpha a^2}{2N+1} \, \left(\frac{3N(3N+1)(6N+1)}{6}\right) \\
&\le \alpha a^2 (3N)(3N+1) \\
&= 12 \pi (3N+1) \alpha \ \ \ \ \ \ (\text{using $a = {\textstyle \frac{2\sqrt{\pi}}{\sqrt{N}}}$}).
\end{align*}
Hence, setting $\alpha = \frac{1}{12\pi(3N+1)}$, we obtain $\mathrm{var}(X) \le 1$.

We will next show that, with $a$ and $\alpha$ as above, $\Gamma_{\alpha;a,N}$ has $\Omega(N)$ modes. This suffices to prove the proposition, since $\Gamma_{\alpha;a,N}$ is a Gaussian mixture with all of its centres in $[-6 \sqrt{\pi N}, 6 \sqrt{\pi N}]$. 

It is easy to check that $\Gamma_{\alpha;a,N}$ has a mode at $0$. We will show that, when $N$ is sufficiently large, $\Gamma_{\alpha;a,N}$ has at least $N-1$ modes in each of the intervals $[-5\sqrt{\pi N}, -3\sqrt{\pi N}]$ and $[3\sqrt{\pi N}, 5\sqrt{\pi N}]$. By symmetry, it is enough to show this for the interval $[3\sqrt{\pi N}, 5\sqrt{\pi N}]$. For this, we use Lemma~\ref{lem:perturb} with $g = \bigl(\frac{2N+1}{\alpha}\bigr) \Gamma_{\alpha; a,N}$. For this choice of $g$, we have
\begin{align}
|f_a(x) - g(x)| &= \left| \sum\limits_{n: |n| < N \text{ or } |n| > 3N} \phi(x-an) - \left(\frac{1-2\alpha}{\alpha}\right) (2N+1) \phi(x)\right|  \notag \\
& \le \sum\limits_{n: |n| < N \text{ or } |n| > 3N} \phi(x-an) + \left(\frac{1-2\alpha}{\alpha}\right) (2N+1) \phi(x) \notag \\
& \le \sum\limits_{n < N \text{ or } n > 3N} \phi(x-an) + \left(\frac{2N+1}{\alpha}\right) \, \phi(x) .
\label{ineq:prop2proof}
\end{align}

Consider the first term in \eqref{ineq:prop2proof} above. Writing $x' = x-2aN$, we have
\begin{align*}
\sum\limits_{n < N \text{ or } n > 3N} \phi(x-an)  &= \sum\limits_{n < N \text{ or } n > 3N} \phi(x'-a(n-2N))  \\
&= \sum\limits_{n < -N \text{ or } n > N} \phi(x'-an) \\
& = |f_a(x') - f_{a,N}(x')|  \\
& \le \ C_0 e^{-\frac12\pi N}
\end{align*}
for $|x'| \le \frac12 aN$ and $C_0 = \frac{2}{\sqrt{2\pi}}\left(\frac{e^{-2\pi}}{1-e^{-2\pi}}\right)$, by \eqref{ineq:C0} in the proof of Proposition~\ref{prop:1}. Thus, for $|x-2aN| \le \frac12 aN$, i.e., for $x \in [3\sqrt{\pi N}, 5\sqrt{\pi N}]$, we see that the first term in \eqref{ineq:prop2proof} is bounded above by $C_0 e^{-\frac12\pi N}$.

Turning our attention to the second term in \eqref{ineq:prop2proof}, we first observe that $\frac{2N+1}{\alpha} \le C_0' N^2$ for some constant $C_0'$. Thus,
$$
\left(\frac{2N+1}{\alpha}\right) \, \phi(x)  \le C_0' N^2 \phi(x) \le \frac{1}{\sqrt{2\pi}} C_0' N^2 e^{-\frac92 \pi N},
$$
for $x \ge 3\sqrt{\pi N}$. 

Combining these bounds, we obtain that for $x \in [3\sqrt{\pi N}, 5\sqrt{\pi N}]$, 
$$
|f_a(x) - g(x)| \ \le \ C_0 e^{-\frac12\pi N} + \frac{1}{\sqrt{2\pi}} C_0' N^2 e^{-\frac92 \pi N} 
\ \le \ 2C_0 e^{-\frac12 \pi N}
$$
when $N$ is sufficiently large. As shown in the proof of Proposition~\ref{prop:1}, $C_0 e^{-\frac12 \pi N} < \left(\frac{e^{-2\pi}}{1-e^{-2\pi}}\right) h_a$. Consequently, when $N$ is sufficiently large, for $x \in [3\sqrt{\pi N}, 5\sqrt{\pi N}]$, we have
$$
|f_a(x) - g(x)| \ < \ \left(\frac{2e^{-2\pi}}{1-e^{-2\pi}}\right) h_a \ < \ 0.004 \, h_a.
$$
Then, applying Lemma~4, we obtain that, for all sufficiently large $N$, the function $g = \bigl(\frac{2N+1}{\alpha}\bigr) \Gamma_{\alpha; a,N}$ has at least $N-1$ modes within the interval $[3\sqrt{\pi N}, 5\sqrt{\pi N}]$. This naturally holds for $\Gamma_{\alpha; a,N}$ as well, thus proving the proposition. 
\end{proof}

\section{Proof of Proposition~\ref{prop:3}}
\label{sec:proofofthirdproposition}
Since the proof is entirely analogous to that of Proposition~\ref{prop:1}, we shall only sketch the modifications needed and omit the details. For $a>0$ and integer $N\ge 1$ and define the functions
\begin{align*}
f_{a}(x) &=\sum_{n\in \Z^{d}}\phi_{d}(x-na), \\
f_{a,N}(x) &=\sum_{n\in Q_{N}}\phi_{d}(x-na),
\end{align*}
where $Q_{N}=\{n\in \Z^{d}\; : \; -N\le n_{i}\le N\mbox{ for }1\le i\le d\}$.  By the Poisson summation formula on $\R^{d}$ with respect to the lattice $\Z^{d}$, we get
\begin{align*}
f_{a}(x) &=\frac{1}{a^{d}}\sum_{p\in \Z^{d}}e^{-\frac{1}{2a^{2}}|p|^{2}+\frac{2\pi i}{a}\<p,x\>} \\
&= \frac{1}{a^{d}}\biggl(1+2e^{-\frac{1}{2a^{2}}}\sum_{j=1}^{d}\cos(2\pi x_{j}/a) + O(e^{-\frac{2}{a^{2}}})\biggr) \, ,
\end{align*}
where the big-O term includes the contribution of all $p$ with $|p|\ge 2$. 
Since  $\cos(2\pi t)\le 1-8t^{2}$ for any $t\in \R$, we see that
when $|x|=\frac{a}{2}$, 
\begin{align*}
f_{a}(x) &\le \frac{1}{a^{d}}\biggl(1+2e^{-\frac{1}{2a^{2}}}\sum_{j=1}^{d}(1-{\textstyle \frac{8}{a^{2}}} x_{j}^{2}) +O(e^{-\frac{2}{a^{2}}}) \biggr)\\
&=\frac{1}{a^{d}}\bigl(1+2(d-2)e^{-\frac{1}{2a^{2}}}+O(e^{-\frac{2}{a^{2}}}) \bigr).
\end{align*}
Since $f_{a}(0)=\frac{1}{a^{d}}\bigl(1+2de^{-1/(2a^{2})}+O(e^{-2/a^{2}})\bigr)$, we see that 
\begin{align*}
f_{a}(0)-\sup_{|x|=\frac{a}{2}}f_{a}(x)&= \frac{1}{a^d}\bigl(4e^{-\frac{1}{2a^{2}}}+O(e^{-\frac{2}{a^{2}}})\bigr) \,,
\end{align*}
which is at least $h_{a}:= \frac{3}{a^d} e^{-\frac{1}{2a^{2}}}$, for small enough $a$.  By periodicity, in each cube of the form $na+[-\frac12a,\frac12a]^{d}$, the graph of $f_{a}$ has a hill with peak at $na$ and having  height at least $h_{a}$. Further,
\begin{align*}
|f_{a}(x)-f_{a,N}(x)| &= \frac{1}{(2\pi)^{\frac{d}{2}}}\sum_{n\in \Z^{d}\setminus Q_{N}}e^{-\frac{1}{2a^{2}}|(x+na)|^{2}} \\
&= O(e^{-\frac18 a^{2}N^{2}}) \;\;\; \mbox{ for }|x|\le \frac12 aN.
\end{align*}
Now take $a=\frac{c}{\sqrt{N}}$ to see that for suitable $c,c'$,
\begin{align*}
\sup\limits_{|x|\le c'\sqrt{N}}|f_{a}(x)-f_{a,N}(x)|< \frac12 h_{a}.
\end{align*}
 Therefore, the function $f_{a,N}$ has a local maximum in each cube of the form $na+[-\frac12 a,\frac12 a]^{d}$ that is contained inside the larger cube  $[-c'\sqrt{N},c'\sqrt{N}]^{d}$. This is because the perturbation is too small to wash away the local maximum of $f_{a}$ located at $na$. The number of such cubes is about $(2c'\sqrt{N}/a)^{d}$, which is $\Theta(N^{d})$. 

Taking $N=\sqrt{A}$ gives us a  function $f_{a,N}$ (with $a=c/A$) that is a mixture of Gaussians with centers in $Q_{A}$ and having $\Theta(A^{2d})$ modes. This was the claim of Proposition~\ref{prop:3}.

%

\section{A Concluding Remark} 
We note that our proof techniques do not extend immediately to the heteroscedastic setting, in which the different Gaussian components of the mixture may have different variances (or covariance matrices), while the means remain bounded within $[-A,A]^d$. This is because of our reliance on the Poisson summation formula to obtain some of the estimates needed in our proofs. While our results of course show that the maximum number of modes in the heteroscedastic setting is $\Omega(A^{2d})$, it is not clear, even in the $d=1$ case, if heteroscedasticity can help create an even larger number of modes. This could be an interesting direction of future research.

%
\section*{Acknowledgement}  The authors would like to thank Alex Dytso for asking whether imposing a variance constraint on the mixing distribution would influence the number of modes, which led to Proposition~\ref{prop:2}. They also thank Yury Polyanskiy and Yihong Wu for sharing an early draft of their manuscript \cite{PW20}. 

\bibliographystyle{plain}
\bibliography{references}

\begin{thebibliography}{1}

\bibitem{AEH19}
Carlos Am\'endola, Alexander Engstr\"om, and Christian Haase.
\newblock Maximum number of modes of {G}aussian mixtures.
\newblock {\em Information and Inference: A Journal of the IMA}, 06 2019.
\newblock iaz013.

\bibitem{CW03}
M.~{Carreira-Perpi\~nan} and C.~{Williams}.
\newblock On the number of modes of a {G}aussian mixture.
\newblock Informatics Research Report EDI-INF-RR-0159, School of Informatics,
  Univ.\ of Edinburgh, 2003.

\bibitem{DYPS20}
A.~{Dytso}, S.~{Yagli}, H.~V. {Poor}, and S.~{Shamai (Shitz)}.
\newblock The capacity achieving distribution for the amplitude constrained
  additive {G}aussian channel: {A}n upper bound on the number of mass points.
\newblock {\em IEEE Transactions on Information Theory}, 66(4):2006--2022,
  2020.

\bibitem{JZBWJ16}
Chi Jin, Yuchen Zhang, Sivaraman Balakrishnan, Martin Wainwright, and Michael
  Jordan.
\newblock Local maxima in the likelihood of gaussian mixture models: Structural
  results and algorithmic consequences.
\newblock {\em Advances in Neural Information Processing Systems}, pages
  4116--4124, 2016.

\bibitem{PW20}
Yury Polyanskiy and Yihong Wu.
\newblock Self-regularizing property of nonparametric maximum likelihood
  estimator in mixture models.
\newblock ArXiv:2008.08244, Aug. 2020.

\bibitem{Sil81}
B.~W. {Silverman}.
\newblock Using kernel density estimates to investigate multimodality.
\newblock {\em Journal of the Royal Statistical Society, B}, 43(1):97--99,
  1981.

\bibitem{Smith69}
J.~G. {Smith}.
\newblock {\em On the Information Capacity of Peak and Average Power
  Constrained Gaussian Channels}.
\newblock PhD thesis, University of California, Berkeley, 1969.

\bibitem{Smith71}
J.~G. {Smith}.
\newblock The information capacity of amplitude- and variance-constrained
  scalar {G}aussian channels.
\newblock {\em Information and Control}, 18(3):203--219, 1971.

\end{thebibliography}

%
%
%
%
%

\end{document}